\newcommand{\ot}{\otimes}
\newcommand{\otexp}[2]{{#1^{\ot #2}}}
\newcommand{\catVec}{\@ifnextchar[{\bracketCVec}{\nobracketCVec}}
\def\bracketCVec[#1]{ \mathsf{Vec}_{#1} }
\def\nobracketCVec{ \mathsf{Vec} }
\newcommand{\downar}{\@ifnextchar*{\starDownAr}{\nostarDownAr}}
\def\starDownAr*{ {\raisebox{0.5pt}[0pt][0pt]{\ensuremath{\scriptstyle{\downarrow}}}} }
\def\nostarDownAr{ {\raisebox{1pt}[0pt][0pt]{\ensuremath{\downarrow}}} }
\newcommand{\upar}{\@ifnextchar*{\starUpAr}{\nostarUpAr}}
\def\starUpAr*{ {\raisebox{1pt}[0pt][0pt]{\ensuremath{\scriptstyle{\uparrow}}}} }
\def\nostarUpAr{ {\raisebox{1pt}[0pt][0pt]{\ensuremath{\uparrow}}} }
\newcommand{\id}{\@ifnextchar[{\tensorID}{\notensorID}}
\def\tensorID[#1]{ \otexp{\mathsf{1}}{#1} }
\def\notensorID{ \mathsf{1}	}
\newcommand{\set}[2]{\left\{ #1 \ | \ #2 \right\} }
\newcommand{\N}{\mathbb{N}}
\newcommand{\quism}{\@ifnextchar[{\quismName}{\quismNoName}}
\def\quismName[#1]{\xrightarrow[#1]{\sim}}
\def\quismNoName{\xrightarrow{\sim}}
\newcommand{\eq}[1]{\stackrel{#1}{=}}
\newcommand{\oAss}{\mathcal{A}\mathit{ss}}
\newcommand{\oCom}{\mathcal{C}\mathit{om}}
\newcommand{\smod}
	{\@ifnextchar*{\smod@nonsigma}{\smod@sigma}}
\newcommand{\smod@nonsigma}[1]	%argument je hvezdicka, musi se nacist, aby pak \@ifnextchar kontrolovalo [
	{\@ifnextchar[{\smod@nonsigma@col}{\smod@nonsigma@uncol}}
\newcommand{\smod@sigma}
	{\@ifnextchar[{\smod@sigma@col}{\smod@sigma@uncol}}
\def\smod@sigma@col[#1]{{\mbox{\ensuremath{#1}-\ensuremath{\Sigma}-module}}}
\def\smod@nonsigma@col[#1]{{\mbox{\ensuremath{#1}-collection}}}
\def\smod@sigma@uncol{\mbox{\ensuremath{\Sigma}-module} }
\def\smod@nonsigma@uncol{{\mbox{collection}} }
\newcommand{\Fr}{\@ifnextchar[{\Fr@weight}{\Fr@noweight}}
\def\Fr@weight[#1]#2{\mathbb{F}^{#1}(#2)}
\def\Fr@noweight#1{\mathbb{F}(#1)}
\newcommand{\oo}{\circ}
\newcommand{\ooo}[2]{\sideset{_{#1}}{_{#2}}{\mathop{\oo}}}
\newcommand{\oxi}[1]{\xi_{#1}}
\newcommand{\TJ}{\@ifnextchar*{\TJ@star}{\TJ@nostar}}
\def\TJ@nostar#1{\textrm{TJ}(#1)}
\def\TJ@star*{\textrm{TJ}}
\newcommand{\cyc}[1]{\left(\hspace{-.6ex}\left( #1 \right)\hspace{-.6ex}\right)}	%cyclic tuple
\newcommand{\cc}{\mathbf{c}}	%cycle c
\newcommand{\oQO}{\mathcal{QO}}	%Quantum Open operad
\newcommand{\oQC}{\mathcal{QC}}	%Quantum Closed operad
\newcommand{\oMod}[1]{\textbf{Mod}\left(#1\right)}	%modular envelope
\newcommand{\cM}{\mathsf{M}}
\newcommand{\oP}{\mathcal{P}}
\theoremstyle{definition}
\newtheorem{theorem}{Theorem}[section]
\newtheorem{lemma}[theorem]{Lemma}
\newtheorem{example}[theorem]{Example}
\newtheorem{remark}[theorem]{Remark}
\newtheorem{definition}[theorem]{Definition}
\title{The modular envelope of the cyclic operad $\oAss$}
\author{
Martin Doubek\thanks{The author was supported by GA\v{C}R 201/12/G028 at the initial stage of the project and by GA\v{C}R P201/13/27340P at the later stage.},  \\
Charles University, Prague\\
\texttt{martindoubek@seznam.cz}
}
\date{\today}
\begin{document}

\maketitle
\abstract{
We give a direct combinatorial proof that the modular envelope of the cyclic operad $\oAss$ is the modular operad of (the homeomorphism classes of) $2D$ compact surfaces with boundary with marked points.
}

\section{Introduction}

The forgetful functor from modular to cyclic operads has a left adjoint $\oMod{}$ called modular envelope.
The modular envelope $\oMod{\oCom}$ of the cyclic operad $\oCom$ has been described by M. Markl in \cite{MarklLoop}, where it's relevance in closed string field theory was explained.
$\oMod{\oCom}$ is isomorphic to the modular operad $\oQC$, which consists of homeomorphism classes of $2D$ compact surfaces with boundary (equivalently with punctures) and the operadic composition on $\oQC$ is given by gluing of boundary components (punctures).
In \cite{QOC}, together with coauthors, we studied the operad $\oQO$, which consists of homeomorphism classes of $2D$ compact surfaces with boundary and ``open string ends'' on the boundary (equivalently marked points on the boundary).
Operad $\oQO$ plays a role in the open string field theory analogous to that of $\oQC$ in the closed theory.
The genus zero part of $\oQO$ is isomorphic to the cyclic operad $\oAss$ and it is therefore natural to expect $\oQO \cong \oMod{\oAss}$.
Unlike for $\oQC$, this is not obvious.
In this note, we provide an elementary proof.

Let us note that \emph{derived} modular envelope of $\oAss$ has been studied in the topological context by J. Giansiracusa in \cite{Gia} and also by K. Costello in \cite{Cos}.

We thank Bra\v no Jur\v co, Martin Markl and Korbinian M\"unster for helpful discussions.

%Conventions:
%
%\begin{itemize}
%\item %$\N$ (resp. $\N_0$) stands for the set of positive (resp. nonnegative) integers.
%\end{itemize}

%%%%%%%%%%%
%%%%%%%%%%%
%%%%%%%%%%%

\section{Reminder}

We recall the notion of modular operad, since later we will be referring to the axioms:

\begin{definition} \label{DEFModOp}
Modular operad $\oP$ in a symmetric monoidal category $\cM$ consists of a collection 
$$\set{\oP(C,G)}{G\in\N_0,\ C\textrm{ a finite set}}$$ of objects of $\cM$ and collections
\begin{gather*}
	\set{\oP(\rho):\oP(C,G)\to\oP(D,G)}{\rho:C\to D\textrm{ a bijection}}, \\
	\set{\ooo{a}{b}:\oP(C_1\!\sqcup\!\{a\},G_1)\ot\oP(C_2\!\sqcup\!\{b\},G_2)\!\to\!\oP(C_1\!\sqcup\!C_2,G_1\!+\!G_2)}{G_1,G_2\!\in\!\N_0,\ C_1,C_2\textrm{ finite sets}}, \\
	\set{\oxi{ab}:\oP(C\sqcup\{a,b\},G)\to\oP(C,G+1)}{G\in\N_0,\ C\textrm{ a finite set}}.
\end{gather*}
of morphisms of $\cM$.
These data are required to satisfy the following axioms:
\begin{enumerate}
\item $\ooo{a}{b}  = \ooo{b}{a} \tau$, where $\tau:A\ot B\to B\ot A$ is the monoidal symmetry,
\item $\oP(\id_C)=\id_{\oP(C)},\ \oP(\rho\sigma)=\oP(\rho) \ \oP(\sigma)$ for any composable pair of bijections $\rho,\sigma$,
\item $(\oP(\rho|_{C_1}\sqcup\sigma|_{C_2})) \ \ooo{a}{b} = \ooo{\rho(a)}{\sigma(b)} \ (\oP(\rho)\ot\oP(\sigma))$
\item $\oP(\rho|_{C}) \ \oxi{ab} = \oxi{\rho(a)\rho(b)}\oP(\rho)$
\item $\oxi{ab} \ \oxi{cd} = \oxi{cd} \ \oxi{ab}$
\item $\oxi{ab} \ \ooo{c}{d} = \oxi{cd} \ \ooo{a}{b}$
\item $\ooo{a}{b} \ (\oxi{cd}\ot\id) = \oxi{cd} \ \ooo{a}{b}$
\item $\ooo{a}{b} \ (\id\ot\ooo{c}{d}) = \ooo{c}{d} \ (\ooo{a}{b}\ot\id)$
\end{enumerate}
whenever the expressions make sense.

We emphasize that $\oxi{ab}=\oxi{ba}$ by the definition.
$3.,4.$ express the equivariance of $\ooo{}{},\oxi{}$ respectively and we call $5.-8.$ the associativity axioms.
\end{definition}

\begin{remark}
Precomposing both sides of $7.$ with $\tau$, we obtain a mirror image of $7.$:
\begin{enumerate}
\item[$7'.$] $\ooo{a}{b} \ (\id\ot\oxi{cd}) = \oxi{cd} \ \ooo{a}{b}$
\end{enumerate}
\end{remark}

We require the reader to read Section V.A of \cite{QOC} (pages 23--25) to familiarize herself with the definition of the operad $\oQO$ and notation introduced there.
We use it freely in this paper.
For $\{\cc_1,\cc_2,\ldots,\cc_b\}^g\in\oQO(\bigcup_{i=1}^b\cc_i,G=2g+b-1)$, $g$ is called geometric genus.

The cyclic operad $\oAss$ is the $G=0$ part of $\oQO$, thus there is a canonical inclusion $\oAss\to\oQO$.
The elements of $\oAss$ are of the form $\{\cyc{C}\}^0$, which we denote simply by $\cyc{C}$.

\begin{remark}
Most naturally, the operads $\oAss$ and $\oQO$ live in the category of sets.
However, since we considered Feynman transform (an analogue of bar construction for modular operads) of $\oAss$ and $\oQO$ in \cite{QOC}, it is useful to consider their linear spans or even view them as chain complexes concentrated in zero degree.
The rest of the article is independent of this choice of the underlying category $\cM$.
\end{remark}

%%%%%%%%%%%
%%%%%%%%%%%
%%%%%%%%%%%

\section{The result}

We prove:

\begin{theorem} \label{THMTheResult}
Let $\cM$ be any of the categories of sets, (graded) vector spaces\footnote{With degree $0$ linear maps.} or chain complexes.
For any modular operad $\oP$ in $\cM$ and any cyclic operad morphism $f:\oAss\to\oP$, there is a unique modular operad morphism $\tilde{f}:\oQO\to\oP$ such that the following diagram commutes:
$$\begin{tikzpicture}
\matrix (m) [matrix of math nodes, row sep=3em, column sep=2.5em, text height=1.5ex, text depth=0.25ex]
{ \oAss & & \oQO \\
& & \oP \\ };
\path[->,font=\scriptsize] (m-1-1) edge (m-1-3);
\path[->,font=\scriptsize] (m-1-1) edge node[below left]{$\forall f$} (m-2-3);
\path[->,dashed,font=\scriptsize] (m-1-3) edge node[right]{$\exists!\tilde{f}$} (m-2-3);
\end{tikzpicture}$$
The horizontal map is the canonical inclusion.

Thus, by a standard general nonsense argument, $\oQO$ is isomorphic to $\oMod{\oAss}$ as a modular operad.
\end{theorem}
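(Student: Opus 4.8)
The plan is to verify the stated universal property directly, splitting the work into \emph{uniqueness} (which is formal once a generation result is available) and \emph{existence} (the substance of the argument). First I would prove a generation lemma: every element of $\oQO$ is obtained from elements of $\oAss$ (i.e.\ from disks carrying marked boundary points) by finitely many applications of the structure operations $\ooo{a}{b}$, $\oxi{ab}$ and the relabelings $\oQO(\rho)$. Geometrically this is the classical fact that any compact surface with marked boundary points is cut into polygons by a finite system of disjoint, properly embedded arcs, and dually is reassembled from those polygons by gluing boundary marked points. I would prove it by induction on the modular genus $G=2g+b-1$: the case $G=0$ forces $g=0$, $b=1$, so the element already lies in $\oAss$; and for $G\geq 1$ the surface is not a disk, so an essential arc exists, and cutting along a single such arc realizes the element as $\oxi{ab}$ (when the cut leaves the surface connected) or $\ooo{a}{b}$ (when it separates) applied to surface(s) of strictly smaller $G$. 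Granting generation, uniqueness of $\tilde f$ is immediate: any two modular-operad morphisms $\oQO\to\oP$ restricting to $f$ on $\oAss$ agree on every generator and, being compatible with $\ooo{}{}$, $\oxi{}$ and relabeling, agree on all of $\oQO$.

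To construct $\tilde f$, I would fix for each element $S\in\oQO$ a decomposition into disks as above, send each disk piece through $f$, and assemble the images using the corresponding operations of the target $\oP$; this produces a candidate value $\tilde f(S)$. Two checks then remain. For independence of the decomposition: any two arc systems on $S$ are connected by a finite sequence of elementary moves --- sliding an arc endpoint past a marked point, performing two disjoint cuts in the opposite order, and passing a cut across a gluing arc or across a handle --- and I would match each move to exactly one modular-operad axiom so that the two assembled values coincide in $\oP$. Concretely, reordering disjoint cuts is axiom $5$, interchanging a self-gluing with a composition is axioms $6$, $7$ and $7'$, reassociating nested gluings is axiom $8$, the two orders of a single composition is axiom $1$, and the labeling ambiguity introduced by each cut is absorbed by the equivariance axioms $3$ and $4$. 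Once $\tilde f$ is well defined, that it is a morphism of modular operads is verified by the same mechanism: applying an operation in $\oQO$ and then decomposing yields a decomposition of the result which these same axioms identify with the image under $\tilde f$ of the corresponding operadic combination of the inputs.

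The hard part is the well-definedness, that is, the confluence of decompositions. The real difficulty is not any single axiom but the bookkeeping: showing that the short list of elementary moves already connects \emph{any} two decompositions of a given surface, and that each move is local enough to be absorbed by a single axiom while correctly tracking both the cyclic orders on the boundary components and the auxiliary labels that each contraction introduces. The cleanest route is to single out, for every surface, a \emph{canonical} decomposition (a normal form) and to show that every elementary move preserves the resulting element of $\oP$; then $\tilde f$ may be defined outright via the normal form, making it well defined by fiat, and only the homomorphism property remains, for which invariance of the normal form under moves is exactly what is needed. Degenerate low-complexity surfaces --- those with few marked points or small $G$, where the decompositions are non-generic --- will require separate, direct verification, but for each pair $(g,b)$ these are finite in number and present no conceptual obstacle.
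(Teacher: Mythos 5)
Your overall architecture --- a generation lemma making uniqueness formal, then existence via a chosen decomposition into disks, with well-definedness reduced to elementary moves matched against the modular-operad axioms --- is the same as the paper's, and your normal-form variant is exactly the route the paper takes. But there is a genuine gap precisely where you yourself locate the difficulty, and it is not mere bookkeeping. Your existence argument rests on the claim that any two disk decompositions (arc systems) of a given surface are connected by your short list of elementary moves. That is a substantial theorem of surface topology (of the same nature as connectivity of arc complexes, or of ideal triangulations under flips); it is nowhere proved in your proposal, and your list of moves is not obviously complete. The remark that degenerate low-complexity cases are ``finite in number'' does not dispose of it, since the problem is uniform in $(g,b)$ and the set of decompositions of a fixed surface is unbounded. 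The paper avoids ever needing such a connectivity statement: it takes as canonical form a \emph{single} polygon with a rigid gluing pattern, namely $q=\oxi{12}\oxi{34}\cdots\oxi{2b+4g-4,2b+4g-2}(a_q)$ with $a_q=\cyc{C_1 e_1 C_2 e_2 e_3 C_3 e_4\cdots}$, so that the only residual choices are the auxiliary labels $e_i$, the order of the $\oxi{}$'s, the order of the boundary cycles $C_i$, and a cyclic rotation of each $C_i$ --- an explicit, finite list of ambiguities, each killed by a lemma.

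The second missing ingredient is the key algebraic computation that powers all the moves. In the paper this is the Main Lemma: if an arc separates the polygon into two semicircles, the labels on the two sides may be independently cyclically permuted, i.e. $\oxi{xy}f\cyc{ABxCy}=\oxi{xy}f\cyc{BAxCy}$; it is proved by writing $\cyc{ABxCy}=\cyc{BxCu}\ooo{u}{v}\cyc{vyA}$ and invoking associativity axiom $6$, $\oxi{ab}\ooo{c}{d}=\oxi{cd}\ooo{a}{b}$, together with the fact that $f$ is a map of cyclic operads. From this single identity the paper derives the ``move a boundary'' and ``move a handle'' lemmas, and those two suffice both for well-definedness of $\tilde f$ and for the checks that $\tilde f$ respects bijections, $\ooo{}{}$ and $\oxi{}$. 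Your move-to-axiom dictionary gestures at this (you cite axioms $6$, $7$, $7'$), but you never isolate the computation, and without it neither your general-decomposition route nor your normal-form route closes; note also that with the paper's (non-unique) canonical form, well-definedness is not ``by fiat'' --- it is exactly the content of the cycle-reordering and cycle-rotation checks. So: either prove the connectivity-of-decompositions theorem you are implicitly relying on, or, better, adopt the one-polygon canonical form and prove the cyclic-rotation identity from axiom $6$; as written, the existence half of your argument is incomplete.
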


The notation of Theorem \ref{THMTheResult} will be fixed thorough the paper.

The following lemma is a statement about gluing ($\oxi{}$'s) of surfaces (elements of $\oQO$) out of elementary parts (elements of $\oAss$):

\begin{lemma}
Let 
$$q=\{\cc_1,\cc_2,\ldots,\cc_b\}^g$$
be an element of $\oQO(\bigcup_{i=1}^b\cc_i,G=2g+b-1)$.
For each cycle $\cc_i$, choose its representing $|\cc_i|$-tuple $C_i$.
Choose an arbitrary distinct elements $e_1,\ldots,e_{2G}$ s.t. $e_j\not\in\bigcup_{i=1}^b\cc_i$ for each $j$.
Define $a_q \in \oAss(\bigcup_{i=1}^b\cc_i \sqcup \{e_1,\ldots,e_{2G}\})$ by
\begin{gather} \label{EQaq}
a_q:=\cyc{C_1 e_1 C_2 e_2 e_3 C_3 e_4 \cdots e_{2b-3} C_b e_{2b-2} e_{2b-1} e_{2b} \cdots e_{4g+2b-2}}.
\end{gather}
$C_1e_1C_2\cdots$ is the usual concatenation of tuples viewing $e_i$ as a $1$-tuple $(e_i)$.
Then 
\begin{gather} \label{EQCanForm}
q = \oxi{12}\oxi{34}\cdots\oxi{2b-3,2b-2}\oxi{2b-1,2b+1}\oxi{2b,2b+2}\cdots\oxi{2b+4g-5,2b+4g-3}\oxi{2b+4g-4,2b+4g-2}(a_q)
\end{gather}
$\oxi{ij}$ is a shorthand for $\oxi{e_ie_j}$.
\end{lemma}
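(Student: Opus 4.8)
The plan is to evaluate the right-hand side of \eqref{EQCanForm} as a surface and to check that it has the same genus, the same number of boundary components and the same cyclic arrangements of marked points as $q$; since elements of $\oQO$ are homeomorphism classes and these data are exactly what the notation $\{\cc_1,\ldots,\cc_b\}^g$ records, this will give the identity. First I would observe that by axiom $5.$ the operations $\oxi{}$ pairwise commute, so the order of application is immaterial and I may perform the first block $\oxi{12}\cdots\oxi{2b-3,2b-2}$ before the second. I would also regroup the word \eqref{EQaq}, reading it as $C_1$ followed by the blocks $e_{2i-3}\,C_i\,e_{2i-2}$ for $i=2,\ldots,b$ and then by the $4g$ consecutive auxiliary points $e_{2b-1},\ldots,e_{4g+2b-2}$; thus for each $i\ge 2$ the two points to be glued in the first block flank exactly the tuple $C_i$.

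First I would analyse the first block. Here $a_q$ is a single disk, i.e.\ genus $0$ with one boundary component. I claim that $\oxi{e_{2i-3}e_{2i-2}}$ pinches off the arc carrying $C_i$ as a separate boundary component $\cyc{C_i}$, keeping the genus at $0$ and the cyclic order of the remaining points intact. As each such gluing raises $G$ by one without changing the genus, after the whole block one obtains a planar surface with boundary components $\cyc{C_2},\ldots,\cyc{C_b}$ together with one distinguished component carrying $C_1$ followed by the still-consecutive points $e_{2b-1},\ldots,e_{4g+2b-2}$. The consistency check is $G = 2\cdot 0 + b - 1 = b-1$ after the $b-1$ gluings; the thing to verify with care is the bookkeeping of the cyclic words under pinching.

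Next I would analyse the second block. On the distinguished component the points $e_{2b-1},\ldots,e_{4g+2b-2}$ are consecutive, and the block glues them, four at a time, in the crossed pattern $\oxi{e_{2b+4k-5}e_{2b+4k-3}}\,\oxi{e_{2b+4k-4}e_{2b+4k-2}}$ for $k=1,\ldots,g$. I claim that each such crossed pair attaches one handle: it raises the genus by one and leaves the boundary words unchanged. After the $g$ steps all auxiliary points are consumed, the genus equals $g$, and the distinguished component has become $\cyc{C_1}$. The arithmetic check is that the $2g$ gluings raise $G$ from $b-1$ to $2g+b-1$ with $b$ held fixed, so they can only add genus.

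The main obstacle is precisely the two geometric claims above, which must be justified inside the explicit combinatorial model of $\oQO$ recalled in Section V.A of \cite{QOC}: that a self-gluing $\oxi{}$ of two points on one boundary circle splits that circle in two when its chord crosses no other, as in the first block, whereas a pair of mutually crossing gluings, as in the second block, instead attaches a handle; and that in both cases the tuples $C_i$ are transported to the correct component in the correct cyclic order. Once these local effects of $\oxi{}$ are established, the computation above shows that the right-hand side of \eqref{EQCanForm} has genus $g$ and boundary components $\cyc{C_1},\ldots,\cyc{C_b}$, hence equals $q$, which completes the proof.
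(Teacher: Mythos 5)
Your proposal is correct and takes essentially the same route as the paper's proof: both verify \eqref{EQCanForm} by tracking the combinatorial effect of each $\oxi{}$ on the cyclic word, the only (cosmetic) difference being that you invoke axiom $5.$ to perform the non-crossing gluings first (splitting off $\cyc{C_2},\ldots,\cyc{C_b}$) and the crossed ones afterwards (adding the $g$ handles), whereas the paper applies the gluings in the written right-to-left order, handles first and splittings second. The two ``geometric claims'' you defer as the main obstacle are not actually an obstacle: the splitting rule is literally the defining formula for $\oxi{}$ on two points of one boundary component in $\oQO$, and the handle rule follows from two applications of that definition (the first gluing of a crossed pair splits off a one-point boundary component, the second then joins two distinct boundary components of the same surface and raises the genus by one), which is exactly the short computation the paper writes out.
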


\begin{proof}
Every pair of the last $2g$ $\xi$'s in \eqref{EQCanForm} removes four consecutive $e_i$'s and increases the geometric genus:
\begin{gather*}
\oxi{2G-2,2G}(a_q) = \{\cyc{C_1 e_1 C_2 e_2 e_3 C_3 e_4 \cdots e_{2b-3} C_b e_{2b-2} e_{2b-1} e_{2b} \cdots e_{2G-4} e_{2G-3}} \cyc{e_{2G-1}}\}^0 \\
\oxi{2G-1,2G-3}\oxi{2G-2,2G}(a_q) = \{\cyc{C_1 e_1 C_2 e_2 e_3 C_3 e_4 \cdots e_{2b-3} C_b e_{2b-2} e_{2b-1} e_{2b} \cdots e_{2G-4}}\}^1
\end{gather*}
Thus
$$\oxi{2b-1,2b+1}\oxi{2b,2b+2}\cdots\oxi{2G-3,2G-1}\oxi{2G-2,2G}(a_q) = \{\cyc{C_1 e_1 C_2 e_2 e_3 C_3 e_4 \cdots e_{2b-3} C_b e_{2b-2}}\}^g =: E.$$
Next, each of the first $b$ $\xi$'s in \eqref{EQCanForm} removes a pair of $e_i$'s and separates a cycle:
$$\oxi{2b-3,2b-2}(E) = \{\cyc{C_1 e_1 C_2 e_2 e_3 C_3 e_4 \cdots e_{2b-4}} \cyc{C_b}\}^g$$
and thus
$$\oxi{12}\oxi{34}\cdots\oxi{2b-3,2b-2}(E)=\{\cyc{C_1} \cyc{C_2} \cdots \cyc{C_b}\}^g = q.$$
\end{proof}

The expression \eqref{EQCanForm} is called a canonical expression of $q$.
There are many canonical expressions of $q$:
\begin{enumerate}
\item $e_i$'s are not uniquely determined,
\item given $\cc_i$, it's representing tuple $C_i$ is not uniquely determined,
\item the order of $C_i$'s in \eqref{EQaq} is not uniquely determined.
\end{enumerate}

By definition, any morphism of modular operads commute with $\oxi{}$'s, hence:

\begin{lemma} \label{LEMCanFormg}
\begin{gather} \label{EQCanFormg}
%\tilde{f}(q) = \oxi{12}\oxi{34}\cdots\oxi{2b-3,2b-2}\oxi{2b-1,2b+1}\oxi{2b,2b+2}\cdots\oxi{2b+4g-5,2b+4g-3}\oxi{2b+4g-4,2b+4g-2} f(a_q),
\tilde{f}(q) = \oxi{12}\oxi{34}\cdots\oxi{2b-3,2b-2}\oxi{2b-1,2b+1}\cdots\oxi{2b+4g-4,2b+4g-2} f(a_q),
\end{gather}
hence the morphism $\tilde{f}$ of modular operads is uniquely determined by the morphism $f$ of cyclic operads. \qed
\end{lemma}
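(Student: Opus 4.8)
The plan is to apply $\tilde{f}$ directly to the canonical expression \eqref{EQCanForm} of $q$ provided by the preceding lemma, and then to move $\tilde{f}$ inward past all the contractions. The first step is to invoke the defining property of a morphism of modular operads: by Definition \ref{DEFModOp}, any such morphism commutes with the contraction operations $\oxi{}$ (as well as with the compositions $\ooo{}{}$ and the relabelling isomorphisms). Applying $\tilde{f}$ to \eqref{EQCanForm} and using this compatibility repeatedly pulls $\tilde{f}$ through the entire string of $\oxi$'s, giving
$$\tilde{f}(q) = \oxi{12}\oxi{34}\cdots\oxi{2b-3,2b-2}\oxi{2b-1,2b+1}\cdots\oxi{2b+4g-4,2b+4g-2}\,\tilde{f}(a_q),$$
where now the contractions on the right are those of the target operad $\oP$.

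The second step is to identify $\tilde{f}(a_q)$ with $f(a_q)$. The element $a_q=\cyc{C_1 e_1 C_2 \cdots}$ is a single cycle of geometric genus $g=0$, hence of the form $\{\cyc{C}\}^0$; thus it lies in the image of the canonical inclusion $\oAss\into\oQO$. Since the triangle in Theorem \ref{THMTheResult} is required to commute, the composite of this inclusion with $\tilde{f}$ equals $f$, so $\tilde{f}(a_q)=f(a_q)$. Substituting this into the displayed identity yields exactly \eqref{EQCanFormg}.

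For the uniqueness assertion I would then observe that the right-hand side of \eqref{EQCanFormg} is assembled entirely from data that do not reference $\tilde{f}$: the cyclic morphism $f$, the resulting element $f(a_q)\in\oP$, and the contraction operations of $\oP$. Because the preceding lemma guarantees that \emph{every} $q\in\oQO$ admits a canonical expression \eqref{EQCanForm}, the value $\tilde{f}(q)$ is forced for every such $q$. Consequently at most one modular operad morphism $\tilde{f}$ can make the diagram of Theorem \ref{THMTheResult} commute, which is the claim.

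I do not anticipate a genuine obstacle here: this is the easy half of the adjunction, and the argument reduces to the morphism axiom plus commutativity of the triangle. The only point deserving care is conceptual rather than technical, namely that \eqref{EQCanFormg} pins down $\tilde{f}(q)$ only relative to a \emph{chosen} canonical expression. For uniqueness this is already enough, since we merely need each $q$ to possess at least one such expression. The genuinely harder issue — that the right-hand side of \eqref{EQCanFormg} is independent of the choices of $e_i$'s, of the representing tuples $C_i$, and of their ordering — is a well-definedness (existence) question and is not required for the present lemma.
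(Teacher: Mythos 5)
Your proof is correct and follows essentially the same route as the paper's (very terse) argument: apply $\tilde{f}$ to the canonical expression \eqref{EQCanForm}, commute it past the $\oxi{}$'s by the morphism axiom, and identify $\tilde{f}(a_q)=f(a_q)$ via the commuting triangle since $a_q$ lies in the image of $\oAss\into\oQO$. Your closing remark correctly separates the uniqueness claim from the well-definedness issue, which the paper indeed defers to the existence half (Lemma \ref{LEMgWellDefd}).
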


The expression \eqref{EQCanFormg} is called a canonical expression of $\tilde{f}(q)$ iff omitting $f$ in \eqref{EQCanFormg} yields a canonical expression of $q$.

To be able to effectively calculate with the canonical expressions, we introduce the following pictorial notation:
For example, 
\begin{gather} \label{EQExampleThreeXis}
\oxi{25}\oxi{38}\oxi{67}f\cyc{12\cdots 8}
\end{gather}
is represented by the picture
$$\PICAAA$$
The labels correspond to $\cyc{12\cdots 8}$ and for each $\oxi{ij}$ in front of $f\cyc{12\cdots 8}$, an arc connects $i$ to $j$.
To further simplify, we discard the labels at endpoints of arcs, thus obtaining
\begin{gather} \label{PICEx}
\PICAAB
\end{gather}

Are we able able to reconstruct the expression \eqref{EQExampleThreeXis} from the picture \eqref{PICEx}?
Not exactly: first, we have to choose labels of the endpoints of arcs.
Second, we have to choose an order of $\oxi{}$'s, i.e. order of the arcs.
But both these choices are irrelevant in the following sense:

\begin{lemma} \label{LEMPicturesOK}
Let $a_q=\cyc{\cdots e_1\cdots e_2\cdots}$ be as above and let $a'_q$ be obtained by replacing each $e_i$ by $e'_i$.
Then 
$$\oxi{12}\oxi{34}\cdots f(a_q) = \oxi{1'2'}\oxi{3'4'}\cdots f(a'_q)$$
and if the LHS is a canonical expression of $\tilde{f}(q)$, then so is the RHS.

Moreover, let $\sigma$ be a permutation of pairs $\{e_1,e_2\},\{e_3,e_4\},\ldots$.
Then 
$$\oxi{12}\oxi{34}\cdots f(a_q) = \oxi{\sigma(12)}\oxi{\sigma(34)}\cdots f(a_q)$$
and the RHS is a canonical expression of $\tilde{f}(q)$.
\end{lemma}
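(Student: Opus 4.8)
The plan is to reduce both assertions to the two basic properties of modular operads that govern $\oxi{}$'s: equivariance (axiom 4) and commutativity (axiom 5). The guiding principle is that the canonical expression \eqref{EQCanFormg} is built entirely out of $\oxi{}$'s applied to $f(a_q)$, so any manipulation that can be expressed through the axioms carries both sides to the same element and preserves canonicality, since canonicality is defined purely by the shape of the underlying expression for $q$ (forgetting $f$).

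For the first assertion, I would regard the passage from $a_q$ to $a'_q$ as the relabeling bijection $\rho$ that fixes $\bigcup_i\cc_i$ and sends each $e_i$ to $e'_i$. First I would observe that $f$, being a cyclic operad morphism, commutes with this relabeling, so $f(a'_q)=\oP(\rho)f(a_q)$. Then I would repeatedly push the $\oP(\rho)$ past the $\oxi{}$'s using axiom 4, which converts $\oxi{\rho(e_i)\rho(e_j)}\oP(\rho)$ back into $\oP(\rho)\oxi{e_ie_j}$; after moving it through all the $\oxi{}$'s it lands on an element of $\oP$ with no free $e$-labels left, where $\oP(\rho)$ acts as the identity. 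This shows the two composites agree. For canonicality, I would note that deleting $f$ from the RHS gives exactly the canonical expression of $q$ obtained by relabeling, which is still canonical by the freedom in choosing the $e_i$'s (point 1 in the list following the first lemma).

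For the second assertion, permuting the pairs $\{e_1,e_2\},\{e_3,e_4\},\ldots$ only reorders the $\oxi{}$'s in the composite. First I would note that distinct $\oxi{}$'s acting on disjoint pairs of labels commute by axiom 5, so any permutation $\sigma$ of the pairs can be realized by a sequence of adjacent transpositions, each an instance of axiom 5; hence the reordered composite equals the original. Canonicality is then immediate, since reordering the $\oxi{}$'s does not change the underlying $q$ and the order of $\oxi{}$'s is explicitly allowed to vary (point 3 in the same list, together with the remark that canonical expressions are insensitive to the order of the arcs).

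I expect the only genuinely delicate point to be the bookkeeping in the first assertion: one must check that $\oP(\rho)$ really does act trivially at the end, i.e. that after all the $e_i$'s have been contracted by the $\oxi{}$'s, the bijection $\rho$ has become the identity on the remaining label set $\bigcup_i\cc_i$. This is where I would be most careful to ensure the indices in axiom 4 match up correctly across the whole chain of $\oxi{}$'s; the commutativity argument for the second assertion is routine by comparison.
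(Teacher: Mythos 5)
Your proposal is correct and takes essentially the same route as the paper's own proof: encode the relabeling as a bijection $\rho$ fixing $\bigcup_i\cc_i$, use that $f$ is a cyclic operad morphism to write $f(a'_q)=\oP(\rho)f(a_q)$, push $\oP(\rho)$ outward through the chain of $\oxi{}$'s via the equivariance axiom $4.$ (with the restrictions shrinking at each step) until it becomes $\oP(\rho|_{\bigcup_i\cc_i})=\id$, and handle the reordering of pairs by the commutativity axiom $5.$ The paper likewise treats the canonicality claims as immediate, so nothing is missing.
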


\begin{proof}
For the first part, $a_q\in\oAss(C\sqcup\{e_1,e_2,\ldots,e_{2G}\})$ and let's define a bijection $\rho:C\sqcup\{e_1,e_2,\ldots\}\to C\sqcup\{e'_1,e'_2,\ldots\}$ by $\rho(e_i):=e'_i$ for each $i$ and $\rho|_C=\id_C$.
Then $a'_q=\oAss(\rho)(a_q)$.
Since $f$ is a morphism of cyclic operads, by a repeated use of the equivariance of $\oxi{}$ we get
\begin{gather*}
\oxi{1'2'}\oxi{3'4'}\cdots\oxi{(2G-1)',(2G)'} f(a'_q) = \oxi{1'2'}\oxi{3'4'}\cdots\oxi{(2G-1)',(2G)'}\oP(\rho)f(a_q) = \\
= \oxi{1'2'}\oxi{3'4'}\cdots\oxi{(2G-1)',(2G)'}\oP(\rho|_{C\sqcup\{e_1,e_2,\ldots,e_{2G-3},e_{2G-2}\}})\oxi{2G-1,2G} f(a_q) = \cdots \\
\cdots = \oP(\rho|_{C}) \oxi{12}\oxi{34}\cdots \oxi{2G-1,2G} f(a_q) = \oxi{12}\oxi{34}\cdots \oxi{2G-1,2G} f(a_q).
\end{gather*}

The second part of the lemma follows by the associativity axiom $5.$

The claims about canonical expressions are clear.
\end{proof}

\begin{remark}
Obviously, the proof of Lemma \ref{LEMPicturesOK} also proves:
Let $\oP$ be a modular operad, $p\in\oP(C\sqcup\{e_1,\ldots,e_{2k}\},G)$ and let $\sigma$ be a permutation of pairs $\{e_1,e_2\},\ldots,\{e_{2k-1},e_{2k}\}$.
Then $\oxi{e_1e_2}\cdots\oxi{e_{2k-1}e_{2k}}p=\oxi{\sigma(e_1e_2)}\cdots\oxi{\sigma(e_{2k-1}e_{2k})}p$.
\end{remark}

We introduce one more pictorial convention:
Suppose there is a sequence $l_1,\ldots,l_n$ of (counter-clockwise) consecutive labeled points on the circle in picture such as \eqref{PICEx} such that none of them is an endpoint of an arc.
Then we can replace this sequence by a single ``bold'' point labeled by $(l_1,\ldots,l_n)$.
For example,
$$\PICAAC=\PICAAD$$
where $A:=(l_1,l_2)$ and $B:=(l_3,\ldots,l_6)$.

As an example, we rewrite the expression \eqref{EQCanFormg} in picture:
$$\PICABS$$

\begin{remark}
The above pictorial notation seemingly has another interpretation:
One might try to construct the modular envelope $\oMod{\oAss}$ of $\oAss$ directly from $\oAss$ by putting in formal results of the $\oxi{}$ operations, subject only to the relations implied by the axioms of modular operad.
The element of $\oAss$ is then depicted by the circle and the arcs depict the $\oxi{}$ operations.
The linear span of all the circles with arcs is then equipped with ``obvious'' $\ooo{}{}$ and $\oxi{}$ operations.
However, these do not satisfy the associativity axiom $6.$, thus this naive candidate for $\oMod{\oAss}$ is not even a modular operad.
Thus this interpretation of the pictures is incorrect, but it still suggests that the axiom $6.$ plays an important role.
Indeed, this is reflected in the following lemma:
\end{remark}

\begin{lemma}[The Main Lemma]
$$\PICAAE{A}{B}=\PICAAE{B}{A}$$
Informally: if an arc cuts the circle into two semicircles, the labels on both semicirles can be (independently) cyclically permuted.
\end{lemma}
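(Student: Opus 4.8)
The plan is to read the picture $\PICAAE{A}{B}$ as the canonical expression $\oxi{xy}\,f\cyc{y\,A\,B\,x\,C}$, where $x,y$ denote the two cut points joined by the arc and, reading the boundary counter-clockwise, the right semicircle carries the blocks $A,B$ while the left one carries $C$. Swapping $A$ and $B$ produces $\oxi{xy}\,f\cyc{y\,B\,A\,x\,C}=\PICAAE{B}{A}$, so the Main Lemma amounts to the identity $\oxi{xy}\,f\cyc{y\,A\,B\,x\,C}=\oxi{xy}\,f\cyc{y\,B\,A\,x\,C}$. Following the hint of the preceding remark, the engine will be the associativity axiom $6.$, which lets one trade a $\ooo{}{}$-gluing for a $\oxi{}$-contraction.

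First I would factor the single cyclic word through the cyclic composition of $\oAss$. Introducing a fresh pair $g,h$ and cutting between the blocks $A$ and $B$ gives $\cyc{y\,A\,B\,x\,C}=\cyc{y\,A\,g}\ooo{g}{h}\cyc{h\,B\,x\,C}$, a splitting into two genus-$0$ pieces with $y$ on the first and $x$ on the second. Because $f$ is a morphism of cyclic operads it intertwines $\ooo{}{}$, so $f\cyc{y\,A\,B\,x\,C}=f\cyc{y\,A\,g}\ooo{g}{h}f\cyc{h\,B\,x\,C}$ in $\oP$. Now axiom $6.$, applied with the glued pair $\{g,h\}$ and the contracted pair $\{x,y\}$ (and with $y$ in the first factor, $x$ in the second), yields
\[
\oxi{xy}\bigl(f\cyc{y\,A\,g}\ooo{g}{h}f\cyc{h\,B\,x\,C}\bigr)=\oxi{gh}\bigl(f\cyc{y\,A\,g}\ooo{y}{x}f\cyc{h\,B\,x\,C}\bigr).
\]

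It then remains to recombine the right-hand side. Since $f$ intertwines $\ooo{}{}$, the inner term is $f\bigl(\cyc{y\,A\,g}\ooo{y}{x}\cyc{h\,B\,x\,C}\bigr)$, and computing this cyclic composition in $\oAss$ gives $\cyc{y\,A\,g}\ooo{y}{x}\cyc{h\,B\,x\,C}=\cyc{A\,g\,C\,h\,B}$; thus the right-hand side equals $\oxi{gh}\,f\cyc{A\,g\,C\,h\,B}$. Renaming the arc endpoints $g\mapsto x$, $h\mapsto y$ (permissible by the first part of Lemma~\ref{LEMPicturesOK}) and rotating the cyclic word, this is exactly $\oxi{xy}\,f\cyc{y\,B\,A\,x\,C}=\PICAAE{B}{A}$, which is what we wanted. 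Geometrically, axiom $6.$ has pushed the contracted pair around the arc, rotating the labels of one semicircle past the cut.

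The main thing to get right is bookkeeping rather than ideas: one must fix the gluing convention for $\ooo{}{}$ in $\oAss$ (the order of concatenation and which leg is severed), verify that after axiom $6.$ the recombined word is precisely the block-swapped one, and check that its picture coincides with $\PICAAE{B}{A}$. A secondary point is the degenerate cases where a block $A$, $B$ or $C$ is empty; these are covered by the same computation once the standing conventions for low-arity elements of $\oAss$ are in force, so they pose no real obstacle.
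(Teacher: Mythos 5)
Your proof is correct and follows essentially the same route as the paper: both split the cyclic word between the blocks $A$ and $B$ via a fresh glued pair, use that $f$ intertwines $\ooo{}{}$, apply associativity axiom $6.$ to exchange the glued pair with the contracted pair, and finish by recombining and renaming (the equivariance argument behind Lemma~\ref{LEMPicturesOK}). The only differences are cosmetic bookkeeping choices (names $g,h$ versus $u,v$, and which semicircle sits in which tensor factor).
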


\begin{proof}
This amounts to proving $\oxi{xy}f\cyc{ABxCy} = \oxi{xy}f\cyc{BAxCy}$.
We have $\cyc{ABxCy} = \cyc{BxCu} \ooo{u}{v} \cyc{vyA}$.
Then 
$$\oxi{xy}f(\cyc{BxCu} \ooo{u}{v} \cyc{vyA}) = \oxi{xy}\ooo{u}{v}(f\ot f)(\cyc{BxCu}\ot\cyc{vyA}) = \cdots$$
since $f$ is a morphism of cyclic operads,
$$\cdots=\oxi{uv}\ooo{x}{y}(f\ot f)(\cyc{BxCu}\ot\cyc{vyA})=\cdots,$$
by the associativity axiom $6.$,
$$\cdots=\oxi{uv}f(\cyc{BxCu}\ooo{x}{y}\cyc{vyA})=\oxi{uv}f\cyc{CuBAv}=\oxi{xy}f\cyc{BAxCy}.$$
The last equality is proved in the same way as Lemma \ref{LEMPicturesOK}.
\end{proof}

In fact, the proof of Theorem \ref{THMTheResult} follows in a straightforward way from The Main Lemma.
Still, we give full details below.

Here is an example of calculations using The Main Lemma:

\begin{example}
$$\PICAAM=\PICAAN$$
Denote the left endpoint of the red arc by $a$, the right by $b$ and similarly $c,d$ for endpoints of the black arc.
The LHS is $\oxi{cd}\oxi{ab}f(a_q)$ with $a_q=\cyc{b123ac4d}$.
By Lemma \ref{LEMPicturesOK}, we can always have the endpoints of the red arc written as subscripts at the rightmost $\oxi{}$.
Now we apply The Main Lemma to get $\oxi{ab}f\cyc{b123ac4d}=\oxi{ab}f\cyc{b231ac4d}$.
The permutation is clear from the result.

In the sequel, we just colour one arc red on the LHS to signify an application of The Main Lemma and we leave it to the reader to figure out the permutation from the result on the RHS.
To make it easier, we always fix the red arc while passing from the LHS to the RHS (but the arc turns black on the RHS).
Finally, let's emphasize that endpoints of arcs are connected to labels rather than actual points on the circle.
Consequently, if the labels move, so do the endpoints of the arc.
For example:
$$\PICAAO=\PICAAP=\PICAAQ$$

We can also use The Main Lemma on both semicircles simultaneously:
$$\PICAAM=\PICAAR$$
\end{example}

\begin{lemma} \label{LEMboundary}
$$\PICAAF=\PICAAG$$
Here the labels $l_1,\ldots,l_i,m_1,\ldots,m_j$ can be connected by any number of arcs; we don't specify the connection and this is emphasized by the dotted lines, but the connections on the LHS and RHS coincide.
Informally: call ``boundary'' a sequence of consecutive labels on the circle, the first and last of which are connected by an arc.
Then a boundary can be moved anywhere on the circle.
\end{lemma}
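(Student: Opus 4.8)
The plan is to translate both pictures into formulas and reduce the statement to a single invocation of The Main Lemma. Write $L=(l_1,\dots,l_i)$ and $M=(m_1,\dots,m_j)$ for the two runs of labels, let $A$ be the tuple enclosed by the distinguished arc, let $x,y$ be the two endpoints of that arc (so that $x$ lies between $m_j$ and $A$, and $y$ between $A$ and $l_1$), and let $\Phi$ denote the composite of all the $\oxi{}$'s coming from the dotted arcs among $L$ and $M$. Then the left-hand side \PICAAF is
$$\oxi{xy}\,\Phi\,f\cyc{xAyLM},$$
and the right-hand side \PICAAG is $\oxi{xy}\,\Phi\,f\cyc{LxAyM}$, where $\Phi$ is literally the same operator on both sides because the dotted connections are required to coincide. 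So the task is to prove that these two elements of $\oP$ are equal.

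First I would push $\oxi{xy}$ inward so that it acts directly on $f(a_q)$: as $\{x,y\}$ is disjoint from every pair appearing in $\Phi$, axiom $5.$ gives $\oxi{xy}\,\Phi=\Phi\,\oxi{xy}$. At this inner stage the labels of $L$ and $M$ have not yet been glued, so I may regard $L$, $M$ and $A$ as three opaque bold points and apply The Main Lemma to $\oxi{xy}f\cyc{xAyLM}$. Writing the cyclic word as $\cyc{LMxAy}$ and letting $L$, $M$, $A$ play the roles of the blocks $A$, $B$, $C$ of The Main Lemma, the two semicircles cut by the arc are exactly $\{L,M\}$ and $\{A\}$, and the lemma interchanges the blocks $L$ and $M$:
$$\oxi{xy}f\cyc{LMxAy}=\oxi{xy}f\cyc{MLxAy}.$$
Reading the right-hand word cyclically as $\cyc{LxAyM}$ and reattaching $\Phi$ — which connects the very same labels, now transported to their new positions — produces $\oxi{xy}\,\Phi\,f\cyc{LxAyM}$, i.e.\ \PICAAG, as desired.

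The one genuine subtlety, and the step I expect to be the main obstacle, is that The Main Lemma is stated only for a single arc applied to $f$ of a single cyclic word, whereas here the distinguished arc coexists with the dotted arcs $\Phi$. The commutation of the first step is exactly the device that circumvents this: by pushing $\oxi{xy}$ inward, The Main Lemma is invoked \emph{before} any other gluing takes place, so $L$ and $M$ behave as solid blocks and their internal (and mutual) arcs simply ride along with their endpoints when the two blocks are swapped. Finally, since $L$ and $M$ form an arbitrary consecutive splitting of all the remaining labels, the move just established — the boundary jumping from the gap between $M$ and $L$ to the gap between $L$ and $M$ — can be specialized (taking $L$ to be a single label advances the boundary one step) and iterated, so the boundary can be transported to an arbitrary position on the circle, which is the informal assertion of the lemma.
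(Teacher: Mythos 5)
Your proof is correct and follows essentially the same route as the paper: the paper's pictorial proof is exactly one application of The Main Lemma to the boundary arc (swapping the blocks $L$ and $M$ on the semicircle opposite $A$), with the coexistence of the dotted arcs handled by the convention — justified via Lemma \ref{LEMPicturesOK}, i.e.\ axiom $5.$ — that the boundary arc's $\oxi{}$ may be taken innermost, which is precisely your commutation of $\oxi{xy}$ past $\Phi$. The only difference is that you spell out in formulas what the paper encodes in its picture calculus.
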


\begin{proof}
$$\PICAAL=\PICAAS=\PICAAG$$
\end{proof}

\begin{lemma} \label{LEMhandle}
$$\PICAAH=\PICAAI$$
Here the labels $l_1,\ldots,l_i,m_1,\ldots,m_j$ can be joined by any number of arcs.
Informally: call ``handle'' a sequence of four consecutive labels $a,b,c,d$ on the circle, such that $a$ is connected with $c$ by an arc and similarly $b$ with $d$.
Then a handle can be moved anywhere on the circle.
The handle will be denoted by a blue dot:
$$\PICAAJ:=\PICAAK$$
\end{lemma}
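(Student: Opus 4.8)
The plan is to reduce the statement to a swap of the two label-blocks that flank the handle, and then to realise that swap by a short diagram chase with The Main Lemma, in the spirit of the proof of Lemma~\ref{LEMboundary}. Write $L=(l_1,\dots,l_i)$ and $M=(m_1,\dots,m_j)$ for the two blocks, each carrying whatever arcs join its labels (possibly to the other block). Up to the freedom of Lemma~\ref{LEMPicturesOK}, the two pictures in the statement are the canonical expressions $\oxi{ac}\oxi{bd}f\cyc{abcd\,L\,M}$ and $\oxi{ac}\oxi{bd}f\cyc{abcd\,M\,L}$ (the latter because $\cyc{L\,abcd\,M}=\cyc{abcd\,M\,L}$ as a cyclic word), so what must be shown is the block swap
\[
\oxi{ac}\oxi{bd}f\cyc{abcd\,L\,M}=\oxi{ac}\oxi{bd}f\cyc{abcd\,M\,L},
\]
with the two arcs $a$-$c$ and $b$-$d$ held fixed. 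This is the exact analogue of Lemma~\ref{LEMboundary}, whose content is the swap $\oxi{xy}f\cyc{x\,A\,y\,L\,M}=\oxi{xy}f\cyc{x\,A\,y\,M\,L}$; the one structural difference is that a boundary carries a single arc, so there the swap is a single cyclic rotation of the unique large semicircle.

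First I would check that The Main Lemma is still available in the presence of the second, crossing arc. To rotate a semicircle of $a$-$c$ I reorder the two $\xi$'s (by the remark following Lemma~\ref{LEMPicturesOK}) so that $\oxi{ac}$ is innermost, apply The Main Lemma to $\oxi{ac}f\cyc{\cdots}$, and then reattach $\oxi{bd}$; the endpoint of $b$-$d$ lying in the rotated semicircle simply travels with its label. Using this, the swap is obtained by alternating rotations on the two arcs, for instance along the route
\[
\cyc{abcd\,L\,M}\ \longrightarrow\ \cyc{abc\,M\,d\,L}\ \longrightarrow\ \cyc{ab\,M\,cd\,L}\ \longrightarrow\ \cyc{a\,M\,bcd\,L}\ \longrightarrow\ \cyc{L\,abcd\,M},
\]
where the successive arrows rotate a semicircle of $a$-$c$, of $b$-$d$, of $a$-$c$, and of $b$-$d$: the first and last move the large (three-atom) semicircle, while the two middle steps rotate a two-atom semicircle, carrying $M$ past the handle point $c$ and then past $b$. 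Throughout, $M$ is split off only at a block boundary, so its internal arcs and any $l$-$m$ arcs ride along rigidly and are restored at the end, matching the prescribed connections on both sides; the final word $\cyc{L\,abcd\,M}$ is the right-hand side.

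The hard part is precisely the crossing of $a$-$c$ and $b$-$d$. Because the two arcs interleave, neither one has a semicircle equal to $\{L,M\}$, so --- unlike in the boundary case --- no single application of The Main Lemma can reverse the cyclic order of the two blocks: one rotation can flip their order only by dragging a handle point into the gap between them, and that displacement then has to be corrected with the other arc. The genuine work is thus the bookkeeping that a finite alternating sequence brings all four points $a,b,c,d$ back to consecutive position with $L$ and $M$ interchanged, and this is exactly where associativity axiom~$6.$ enters, through each use of The Main Lemma. Finally I would appeal once more to Lemma~\ref{LEMPicturesOK} to absorb the relabelling of arc endpoints and the ordering of the $\xi$'s, concluding that the resulting picture is indeed the one on the right.
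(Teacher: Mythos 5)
Your proposal is correct and takes essentially the same approach as the paper: the paper's proof is likewise an alternating sequence of applications of The Main Lemma to the two crossing arcs of the handle (three applications, with the middle one rotating both semicircles of the $b$--$d$ arc simultaneously, versus your four single-semicircle rotations), passing through the same first intermediate configuration $\cyc{abc\,M\,d\,L}$. The bookkeeping points you raise --- making the relevant $\oxi{}$ innermost via the remark after Lemma~\ref{LEMPicturesOK}, and letting arc endpoints travel with their labels --- are exactly the conventions the paper establishes in its Example, so there is no gap.
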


\begin{proof}
$$\PICAAT=\PICAAU=\PICAAV=\PICAAW$$
\end{proof}

Now we proceed to prove the existence part of Theorem \ref{THMTheResult}.
Given $f$, we define $\tilde{f}$ by the formula \eqref{EQCanFormg}.
We have to verify that this is a well defined morphism of modular operads.

\begin{lemma} \label{LEMgWellDefd}
The definition of $\tilde{f}$ is independent of the choice of the canonical form of $\tilde{f}(q)$.
\end{lemma}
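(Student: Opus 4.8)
The plan is to show that any two canonical expressions of $\tilde{f}(q)$ are joined by a finite chain of value-preserving operations, so that they evaluate to the same element of $\oP$. By Lemma \ref{LEMPicturesOK} (and the Remark following it) the value is already insensitive to the choice of the auxiliary elements $e_1,\ldots,e_{2G}$ (choice 1) and to the order of the $\oxi{}$'s; it therefore remains to neutralize choice 2 (the cyclic rotation of each representing tuple $C_i$) and choice 3 (the order of the $C_i$'s in \eqref{EQaq}). I would realize each such elementary change by the pictorial lemmas already proved, together with the cyclic symmetry of the word $a_q$, which is automatic because the elements $\cyc{\cdots}$ of $\oAss$ are cyclic words, so that $\cyc{x_1\cdots x_n}=\cyc{x_2\cdots x_nx_1}$ \emph{as inputs to} $f$.

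First I would dispose of the \emph{enclosed} boundaries $C_2,\ldots,C_b$. In the picture of \eqref{EQCanFormg} each such $C_i$ is precisely the content of the semicircle cut off by its boundary arc, so a cyclic rotation of the tuple $C_i$ is exactly a cyclic permutation of the labels on one side of that arc; this is what the Main Lemma provides, settling choice 2 for $i\ge 2$. For the corresponding part of choice 3, each $C_i$ with $i\ge 2$ is a ``boundary'' in the sense of Lemma \ref{LEMboundary} and may be slid to an arbitrary position on the circle; since adjacent transpositions generate the symmetric group, this realizes any reordering of $C_2,\ldots,C_b$.

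The genuine obstacle is the distinguished, \emph{unenclosed} boundary $C_1$, which is not cut off by an arc and so is not directly reachable by the Main Lemma or by Lemma \ref{LEMboundary}. My approach here is to route around it using the cyclic symmetry of $a_q$: writing $a_q=\cyc{C_1\,w}$, a cyclic rotation of the whole word carries the first letter of $C_1$ around the back of the circle to the far end, past all of the intervening boundaries and handles; I then slide those boundaries (Lemma \ref{LEMboundary}) and handles (Lemma \ref{LEMhandle}) back across it, the composite effect being a clean internal rotation of $C_1$. In the simplest case $b=1$, $g=1$ this is just $\oxi{13}\oxi{24}f\cyc{c_1\cdots c_k\,e_1e_2e_3e_4}=\oxi{13}\oxi{24}f\cyc{c_2\cdots c_kc_1\,e_1e_2e_3e_4}$, obtained by rotating $c_1$ to the end by cyclicity and then sliding the handle back past it via Lemma \ref{LEMhandle}. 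The same device, combined with the relabeling of Lemma \ref{LEMPicturesOK}, lets me interchange $C_1$ with any enclosed $C_i$: after cyclically rotating so that $C_i$ and $C_1$ trade places and relabeling the two auxiliary elements of the arc around $C_i$, the arc comes to enclose $C_1$ while $C_i$ becomes unenclosed (this is transparent already for $b=2$, $g=0$). Thus no boundary is really distinguished, and choices 2 and 3 are fully accounted for.

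Assembling these observations, any canonical expression can be transformed into any other by relabeling (Lemma \ref{LEMPicturesOK}), reordering the $\oxi{}$'s, cyclic rotation of $a_q$, the Main Lemma, and the boundary and handle moves of Lemmas \ref{LEMboundary} and \ref{LEMhandle}; every one of these preserves the value in $\oP$, whence $\tilde{f}(q)$ is independent of the chosen canonical form. I expect the bookkeeping around $C_1$ to be the only delicate point: one must check that sliding all intervening boundaries and handles back across the rotated letter yields exactly the intended rotation or interchange, with no stray permutation of the marked points. This is precisely where the explicit pictorial calculus of the preceding lemmas becomes indispensable.
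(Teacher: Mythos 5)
Your proposal is correct and takes essentially the same route as the paper: the paper's own proof also reduces the claim to exactly two nontrivial moves --- a cyclic rotation of the unenclosed boundary $C_1$, and an interchange of $C_1$ with an enclosed $C_i$ --- and realizes both by sliding boundary blocks (Lemma \ref{LEMboundary}) and handle blocks (Lemma \ref{LEMhandle}) across the relevant letters, exploiting that an arc may be read as enclosing either of the two sides into which it cuts the circle. The ``delicate bookkeeping'' you flag at the end is precisely what the paper's explicit picture chains carry out, so your outline completes in exactly the way you anticipate.
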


\begin{proof}
Consider a canonical expression \eqref{EQCanFormg} with 
$$a_q:=\cyc{C_1 e_1 C_2 e_2 e_3 C_3 e_4 \cdots C_i \cdots e_{2b-3} C_b e_{2b-2} e_{2b-1} e_{2b} \cdots e_{4g+2b-2}}$$
as in \eqref{EQaq} and another canonical expression with
$$a'_q:=\cyc{C_i e_1 C_2 e_2 e_3 C_3 e_4 \cdots C_1 \cdots e_{2b-3} C_b e_{2b-2} e_{2b-1} e_{2b} \cdots e_{4g+2b-2}},$$
otherwise identical.
We verify
$$\oxi{12}\cdots\oxi{2b+4g-4,2b+4g-2}f(a_q)=\oxi{12}\cdots\oxi{2b+4g-4,2b+4g-2}f(a'_q):$$
By repeated applications of Lemma \ref{LEMboundary} and \ref{LEMhandle}, we have
$$\PICAAX \eq{\ref{LEMboundary}} \PICAAY \eq{\ref{LEMhandle}} \PICAAZ \eq{\ref{LEMboundary}} \PICABA$$

Next, consider a canonical expression \eqref{EQCanFormg} with $C_1=(c_1^1,\ldots,c_1^{k})$ as in \eqref{EQaq} and another canonical expression with $C'_1:=(c_1^2,\ldots,c_1^{k},c_1^1)$ in place of $C_1$, otherwise identical.
To simplify the notation, denote $c:=c_1^1$ and $C:=(c_1^2,\ldots,c_1^{k})$.
Again, we verify
$$\oxi{12}\cdots\oxi{2b+4g-4,2b+4g-2}f(a_q)=\oxi{12}\cdots\oxi{2b+4g-4,2b+4g-2}f(a'_q):$$
$$\PICABB \eq{\ref{LEMhandle}} \PICABC \eq{\ref{LEMboundary}} \PICABD$$

These two observations easily imply independence of $\tilde{f}$ on the canonical expression.
\end{proof}

\begin{lemma} \label{LEMgMorOfModOp}
$\tilde{f}$ is a morphism of modular operads.
\end{lemma}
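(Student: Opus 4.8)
The plan is to check the three defining compatibilities of a morphism of modular operads for $\tilde{f}$ as defined by \eqref{EQCanFormg}: equivariance under bijections, commutation with the contractions $\oxi{}$, and commutation with the compositions $\ooo{}{}$. (That $\tilde{f}$ restricts to $f$ on $\oAss$ is immediate: an element $\cyc{C}\in\oAss$ sits inside $\oQO$ as $\{\cyc{C}\}^0$ with $G=0$, so its canonical expression carries no $\oxi{}$ and $\tilde{f}\cyc{C}=f\cyc{C}$.) In every case the method is the same: start from canonical expressions, apply the operation in question, and transport the resulting ``circle with arcs'' picture back to canonical form using The Main Lemma together with Lemmas \ref{LEMboundary} and \ref{LEMhandle}. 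Since each of those moves is an identity of $f$-decorated pictures and $\tilde{f}$ is independent of the chosen canonical form by Lemma \ref{LEMgWellDefd}, the required compatibility will drop out once the picture has been reduced.

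Equivariance is the easiest. For a bijection $\rho$ of $\bigcup_i\cc_i$, extend it by the identity on the auxiliary labels $e_j$; then $a_{\oQO(\rho)q}=\oAss(\rho)(a_q)$ is a canonical cyclic word for $\oQO(\rho)q$. Using that $f$ is a cyclic morphism and commuting $\oP(\rho)$ outward past the $\oxi{}$'s by the equivariance axiom $4$ (exactly as in the proof of Lemma \ref{LEMPicturesOK}), one obtains $\tilde{f}(\oQO(\rho)q)=\oP(\rho)\tilde{f}(q)$.

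For composition, choose canonical expressions $q_1=\Xi_1(a_{q_1})$ and $q_2=\Xi_2(a_{q_2})$ with $a$ a label of $a_{q_1}$ and $b$ a label of $a_{q_2}$, where $\Xi_1,\Xi_2$ denote the two strings of $\oxi{}$'s. The associativity axioms $7$ and $7'$ let me pull every contraction of $\Xi_1$ and $\Xi_2$ to the outside of the composition, since they act on $e$-labels disjoint from $\{a,b\}$, giving $q_1\ooo{a}{b}q_2=\Xi_1\Xi_2\,(a_{q_1}\ooo{a}{b}a_{q_2})$. Here $a_{q_1}\ooo{a}{b}a_{q_2}\in\oAss$ is again a single cycle, so this is a circle-with-arcs picture, which I reduce to canonical form by the lemmas. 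Applying $f$ and using that $f$ respects $\ooo{}{}$ then yields $\tilde{f}(q_1\ooo{a}{b}q_2)=\tilde{f}(q_1)\ooo{a}{b}\tilde{f}(q_2)$.

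The contraction compatibility is the main obstacle. Given a canonical expression $q=\Xi(a_q)$ and two honest labels $a,b$ of $a_q$, the contractions commute (axiom $5$), so $\oxi{ab}(q)=\oxi{ab}\Xi(a_q)$ is a circle-with-arcs picture carrying one \emph{new} arc joining $a$ to $b$; by \eqref{EQCanFormg} the element $\oxi{ab}\tilde{f}(q)=\oxi{ab}\Xi f(a_q)$ is precisely the same picture decorated by $f$. What must be shown is that this picture reduces to a canonical expression of $\oxi{ab}(q)$. This is where the real work lies: inserting an arbitrary arc destroys the rigid boundary/handle pattern of the canonical form, and its effect on the homeomorphism type of the surface --- splitting one boundary into two, merging two boundaries, or producing a new handle --- depends on the relative position of $a$ and $b$. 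The hard part will be the case analysis organizing these possibilities together with the explicit sequence of Main-Lemma cyclic rotations and of boundary/handle migrations that brings the picture to canonical form; because every such move is valid at the $f$-decorated level, the resulting identity $\tilde{f}(\oxi{ab}q)=\oxi{ab}\tilde{f}(q)$ follows, completing the verification that $\tilde{f}$ is a morphism of modular operads.
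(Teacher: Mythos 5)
Your overall framework is the same as the paper's: equivariance via extending the bijection by the identity on the $e_j$'s, composition via the associativity axioms $7.$, $7'.$ and $5.$ to pull all contractions outside of $\ooo{a}{b}$, and then reduction of the resulting circle-with-arcs pictures to canonical form, with Lemma \ref{LEMgWellDefd} converting ``reduces to a canonical expression of $x$'' into ``equals $\tilde{f}(x)$''. The genuine gap is in the contraction compatibility: you correctly isolate the statement that must be proved --- that the picture $\oxi{ab}\Xi f(a_q)$, i.e.\ the canonical picture of $q$ with one extra arc joining $a$ to $b$, can be brought to a canonical expression of $\oxi{ab}q$ --- but you never prove it. Saying that ``the hard part will be the case analysis'' and that the identity ``follows'' because each move is valid at the $f$-decorated level is not an argument: the validity of the individual moves is not in question; what is in question is whether a sequence of such moves exists and lands on the right canonical form, and exhibiting it is exactly the substance of the paper's proof of this lemma.

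The paper settles this with precisely two cases, according to whether $a$ and $b$ lie on the same cycle of $q$ or on different ones. If $q=\{\cyc{aAbB},\cyc{C_2},\ldots,\cyc{C_b}\}^g$, the new arc cuts the circle so that $A$ and $B$ become two separate boundaries, and Main-Lemma rotations together with Lemma \ref{LEMboundary} turn the picture into a canonical one for $\oxi{ab}q=\{\cyc{B},\cyc{A},\cyc{C_2},\ldots,\cyc{C_b}\}^g$ (one more boundary, same genus). If $q=\{\cyc{aA},\cyc{bB},\cyc{C_3},\ldots,\cyc{C_b}\}^g$, the new arc crosses the arc that closed off the boundary $\cyc{bB}$; that crossing pair of arcs is exactly a handle, which is migrated by Lemma \ref{LEMhandle} to produce a canonical picture of $\oxi{ab}q=\{\cyc{BA},\cyc{C_3},\ldots,\cyc{C_b}\}^{g+1}$ (one fewer boundary, genus raised by one). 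Note that your own enumeration of possible effects --- ``splitting one boundary into two, merging two boundaries, or producing a new handle'' --- lists three alternatives where there are only these two: in $\oQO$, merging two boundaries and creating a handle are the same event. That miscount is itself a symptom of the case analysis not having been carried out. A minor further omission: for $\cM$ the category of chain complexes the paper also checks compatibility with the differential, which is immediate since $\oQO$ has zero differential and $f(a_q)$ is a cycle, but it does need to be said.
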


\begin{proof}
First, we verify that $\tilde{f}$ commutes with the action by bijections:
Let $\sigma:C\to D$ be a bijection.
It extends by identity to a bijection $\tilde{\sigma}:C\sqcup\{e_1,\ldots,e_{2G}\}\to D\sqcup\{e_1,\ldots,e_{2G}\}$.
Let $q\in\oQO(C,G)$ have a canonical expression as in \eqref{EQCanForm}:
$$q = \oxi{12}\oxi{34}\cdots(a_q).$$
Then, by the equivariance of $\oxi{}$,
$$\sigma q = \sigma\oxi{12}\oxi{34}\cdots(a_q) = \oxi{12}\oxi{34}\cdots(\tilde{\sigma}a_q).$$
Hence $\sigma q = \oxi{12}\oxi{34}\cdots(\tilde{\sigma}a_q)$ is a canonical expression of $\sigma q\in\oQO(D,G)$.
This justifies the last equality in
\begin{gather*}
\sigma \tilde{f}(q) = \sigma\oxi{12}\oxi{34}\cdots f(a_q) = \oxi{12}\oxi{34}\cdots \tilde{\sigma}f(a_q) = \oxi{12}\oxi{34}\cdots f(\tilde{\sigma} a_q) = \tilde{f}(\sigma q).
\end{gather*}

Second, we verify that $\tilde{f}$ commutes with $\ooo{}{}$:
Let $q=\{\cyc{c,C},\cyc{C_2},\ldots,\cyc{C_b}\}^g$ and $q'=\{\cyc{C',c'},\cyc{C'_2},\ldots,\cyc{C'_{b'}}\}^{g'}$.
Then
\begin{gather*}
\tilde{f}(q) \ooo{c}{c'} \tilde{f}(q') = \PICABE \ooo{c}{c'} \PICABF = \\
= \PICABG = \PICABH = \\
\eq{\substack{\ref{LEMboundary}\\ \ref{LEMhandle}}} \PICABI = \\
= \tilde{f}(\{\cyc{C',C},\cyc{C_2},\ldots,\cyc{C_b},\cyc{C'_2},\ldots,\cyc{C'_{b'}}\}^{g+g'}) = \\
= \tilde{f}(q\ooo{c}{c'}q')
\end{gather*}
To justify the equality of the first and second line, we rewrite the first line (using $a_q$ in the canonical expression of $q$ and $a'_{q'}$ in the canonical expression of $q'$) using the associativity axiom $7.$ (and its mirror image $7'.$) repeatedly in the following way:
\begin{gather*}
\oxi{12}\oxi{34}\cdots f(a_q) \ooo{c}{c'} \oxi{1'2'}\oxi{3'4'}\cdots f(a'_{q'}) = \ooo{c}{c'} \left( \oxi{12}\oxi{34}\cdots f(a_q) \ot \oxi{1'2'}\oxi{3'4'}\cdots f(a'_{q'}) \right) = \\
\eq{7.} \oxi{12} \ooo{c}{c'} \left( \oxi{34}\cdots f(a_q) \ot \oxi{1'2'}\oxi{3'4'}\cdots f(a'_{q'}) \right) \eq{7'.} \oxi{12}\oxi{1'2'} \ooo{c}{c'} \left( \oxi{34}\cdots f(a_q) \ot \oxi{3'4'}\cdots f(a'_{q'}) \right) = \\
\cdots = \oxi{12}\oxi{1'2'}\oxi{34}\oxi{3'4'}\cdots\ooo{c}{c'} \left( f(a_q) \ot f(a'_{q'}) \right) \eq{5.} \oxi{12}\oxi{34}\cdots\oxi{1'2'}\oxi{3'4'}\cdots \ooo{c}{c'} \left( f(a_q)\ot f(a'_{q'}) \right) = \\
= \oxi{12}\oxi{34}\cdots\oxi{1'2'}\oxi{3'4'}\cdots f(a_q \ooo{c}{c'} a'_{q'})
\end{gather*}
and this corresponds to the picture on the RHS of the second line.
The LHS picture with merging circles just reminds us how $\ooo{c}{c'}$ in $\oAss$ is defined.

Last, we verify that $\tilde{f}$ commutes with $\oxi{}$.
There are two cases:

If $q=\{\cyc{aAbB},\cyc{C_2},\ldots,\cyc{C_b}\}^g$, then $\oxi{ab}q=\{\cyc{B},\cyc{A},\cyc{C_2},\ldots,\cyc{C_b}\}^g$ and hence
\begin{gather*}
\oxi{ab}\tilde{f}(q) = \oxi{ab} \PICABJ = \PICABK = \PICABL = \tilde{f}(\oxi{ab}q)
\end{gather*}

If $q=\{\cyc{aA},\cyc{bB},\cyc{C_3},\ldots,\cyc{C_b}\}^g$, then $\oxi{ab}q=\{\cyc{BA},\cyc{C_3},\ldots,\cyc{C_b}\}^{g+1}$ and hence
\begin{gather*}
\oxi{ab}\tilde{f}(q) = \oxi{ab}\PICABM = \PICABN = \PICABO = \\
= \PICABP = \PICABQ = \PICABR = \tilde{f}(\oxi{ab}q)
\end{gather*}

In the case $\cM$ is the category of chain complexes, $\oQO$ has zero differential and
\begin{gather*}
\partial\tilde{f}(q)=\partial\oxi{12}\oxi{34}\cdots f(a_q)=\oxi{12}\oxi{34}\cdots\partial f(a_q)=\oxi{12}\oxi{34}\cdots f(\partial a_q)=0=\tilde{f}(\partial q),
\end{gather*}
thus $\tilde{f}$ is a morphism of chain complexes.
\end{proof}

Finally, Lemmas \ref{LEMCanFormg},\ref{LEMgWellDefd} and \ref{LEMgMorOfModOp} constitute a proof of Theorem \ref{THMTheResult}.

%%%%%%%%%%%
%%%%%%%%%%%
%%%%%%%%%%%

\end{document}